\documentclass[letterpaper, 10 pt, conference]{ieeeconf}
\IEEEoverridecommandlockouts
\usepackage[utf8]{inputenc} 
\usepackage{dsfont}
\usepackage{cite}

\usepackage{amsfonts}
\usepackage{amsthm}
\usepackage{amsmath}
\usepackage{amssymb}
\usepackage{hyperref}       
\usepackage{url}            
\usepackage{booktabs}       
\usepackage{nicefrac}       
\usepackage{microtype}      
\usepackage{mathrsfs}
\usepackage{graphicx}
\usepackage{textcomp}
\usepackage{xcolor}
\usepackage{subcaption}
\usepackage{cleveref}

\newtheorem{theorem}{Theorem}

\newtheorem{lemma}{Lemma}

\newtheorem{remark}{Remark}
\newtheorem{assumption*}{Assumption}
\newtheorem{stdassumption*}{Standing Assumption}

\newtheorem{assumption}{Assumption}

\newtheorem{definition*}{Definition}

\DeclareMathOperator*{\argmin}{arg\,min}

\DeclareMathOperator*{\R}{\mathbb{R}}

\DeclareMathOperator*{\dom}{\mathrm{dom}}

\def\BibTeX{{\rm B\kern-.05em{\sc i\kern-.025em b}\kern-.08em
    T\kern-.1667em\lower.7ex\hbox{E}\kern-.125emX}}
    
\begin{document}

\title{\bf \LARGE 
A Unified Variational Design of Predictive Mirror Descent in Convex Games under Stochastic Feedback
}

\author{ Yunian Pan\textsuperscript{1},~\IEEEmembership{Student Member, IEEE}, Tao Li\textsuperscript{2,\textdagger},~\IEEEmembership{Member, IEEE}, and Quanyan Zhu\textsuperscript{1},~\IEEEmembership{Senior Member, IEEE}
\thanks{\textsuperscript{1}Department of Electrical and Computer Engineering, New York University, Brooklyn, NY, 11201, USA (\texttt{\{yp1170, qz494\}@nyu.edu}).  
  }
  \thanks{\textsuperscript{2}Department of Systems Engineering, City University of Hong Kong, Hong Kong SAR, 999077, China (\texttt{li.tao@cityu.edu.hk}). \textdagger Correspondence.}
}
\maketitle
\begin{abstract}
Mirror descent provides a geometric framework for learning in games, but its last-iterate behavior can fail in weakly stable regimes, where the dynamics may exhibit rotational or recurrent transients. Predictive mirror methods mitigate this issue by modifying the feedback entering the mirror update, yet standard predictive variants are typically introduced algorithmically and analyzed one at a time. This letter gives a variational route to predictive feedback by constructing a stochastic mirror differential game with an auxiliary memory state. Its stage cost couples two Fenchel terms: a strategic term evaluated at a predicted profile and a corrective term driven by realized feedback. The resulting equilibrium feedback induces two-channel predictive mirror dynamics in general mirror geometry. Under local mirror regularity, a quantitative local Bregman growth condition, and bounded Brownian diffusion, we establish finite-horizon local terminal-time bounds in expectation and with high probability, together with an exit-probability estimate for the localization neighborhood. The result provides a unified variational construction of the induced predictive-memory mirror flow together with a local stochastic certificate for last-iterate performance near stable equilibria.
\end{abstract}

\begin{keywords}
Mirror descent, last-iterate convergence, online learning with predictions, stochastic differential game, equilibrium seeking
\end{keywords}
\section{Introduction}
{Learning in games studies equilibrium seeking through players' iterative strategy updates based on environmental feedback \cite{tao22info}, with applications in multi-agent systems \cite{tao22confluence}. Among these schemes, mirror descent generalizes gradient descent by linking primal strategy iterates and dual gradients through a mirror map tailored to the domain geometry \cite{nemirovsky83md}. Its ergodic convergence to Nash equilibria has been studied extensively \cite{nemirovski04prox, hsieh21optmd}. However, average convergence does not imply satisfactory \emph{last-iterate} behavior. In weakly stable games, mirror descent dynamics may exhibit recurrent or rotational transients, sometimes known as Poincar\'{e} recurrence \cite{mertikopoulos2018cycles}, and stochastic perturbations further degrade pointwise performance.}

To mitigate cycling behavior, recent advances introduce predictive modifications: each gradient update is no longer based on the realized feedback from the last step; instead, a prediction of unrealized feedback is used. Examples include optimistic mirror descent \cite{hsieh21optmd}, extra-gradient \cite{mertikopoulos19peg}, mirror-prox \cite{nemirovski04prox}, and more generally, online learning with predictive sequences \cite{chiang12gradual-var, rakhlin13predictable}. From a mirror descent perspective, while the aforementioned algorithms appear in distinct forms, they all apply the mirror map to the prediction of the current, unrealized feedback at each step.    

{However, predictive mirror descent (\textsc{pmd}) still requires structure in the prediction channel. As pointed out in \cite{hsieh21optmd}, \textsc{pmd} may fail last-iterate convergence if the prediction term is not properly configured. This motivates a control-synthesis viewpoint: prediction is generated by an auxiliary memory state coupled to the mirror dynamics, with the feedback law determined by a variational principle. We develop this formulation for convex games with stochastic gradient feedback.}

{Specifically, \textbf{our contributions are threefold}. We formulate a stochastic mirror differential game (\textsc{smdg}) that augments mirror play with predictive strategic and realized corrective channels through an auxiliary memory state. We show in Thm.~\ref{thm:mirror-path} that its closed-loop Nash equilibrium synthesizes predictive mirror descent dynamics. Under local regularity and growth conditions, we establish finite-horizon terminal-time bounds in expectation (Thm.~\ref{thm:expected-terminal}) and with high probability (Thm.~\ref{thm:high-prob-terminal}) for trajectories that remain in the localization neighborhood, together with an explicit exit-probability estimate.}

\noindent\textbf{Related Works.}
Mirror-based learning and regularized game dynamics have been studied extensively in both discrete and continuous time \cite{PM16lyapnov,mertikopoulos2019learning,gao2020continuous,gao2022continuous,pan-tao22noneq, pan-tao23delay, tao26doco}. On the algorithmic side, extra-gradient\cite{mertikopoulos19peg}, mirror-prox \cite{nemirovski04prox}, and optimistic mirror descent \cite{hsieh21optmd} provide predictive modifications that improve stability and last-iterate behavior in monotone variational inequalities, saddle-point problems, and games \cite{tseng2000modified,rakhlin13predictable,daskalakis2017training,pan-tao23delay}. These results are mostly discrete-time and algorithmic; {in particular, \cite{chiang12} first explored optimism in mirror descent, which later developed into general predictive mirror (gradient) descent learning \cite{Rakhlin13predictive}, where the learning agent queries an oracle for predicted gradient information. Following these early efforts, Mertikopoulos et al. carried out a comprehensive examination on the convergence of discrete-time optimistic mirror descent \cite{hsieh21optmd}, mirror-prox \cite{mertikopoulos2018optimistic}, and extra-gradient \cite{mertikopoulos19peg}.  These methods motivate the predictive-query template below; our contribution is a continuous-time synthesis result: we construct the predictive law through a differential game and analyze finite-horizon terminal-time performance under stochastic feedback.}

Variational viewpoints on optimization and learning provide the second strand of related work \cite{tzen2023variational}. Previous work \cite{pan2024variational} used this perspective to represent vanilla mirror play. The present paper keeps that differential-game backbone but shifts from representation to design: the predictive law is synthesized through a two-channel Fenchel structure, and the resulting stochastic dynamics admit expectation and high-probability terminal-time bounds near a stable equilibrium.

\section{Problem Formulation}
{\noindent\textbf{Preliminaries and Notation.}}
 Let $\mathscr{E}=\mathbb{R}^n$ be endowed
with the Euclidean inner product $\langle x,y\rangle\triangleq x^\top y$ and the associated norm $\|\cdot\|$.
For a proper closed convex function $f:\mathscr{E}\to(-\infty,+\infty]$, its Fenchel conjugate is
$f^*(y)\triangleq\sup_{x\in\mathscr{E}}\{\langle x,y\rangle-f(x)\}$ and the associated Fenchel coupling is
$\mathcal{FC}_f(x,y)\triangleq f(x)+f^*(y)-\langle x,y\rangle$. If $f$ is differentiable, the induced Bregman
divergence is $D_f(x,y)\triangleq f(x)-f(y)-\langle \nabla f(y),\,x-y\rangle$. If $f$ is Legendre, then
$\nabla f$ is bijective with inverse $\nabla f^*$ and
$D_{f^*}(\nabla f(x),\nabla f(y))=D_f(y,x)$. An operator $\Psi:\mathcal{X}\to\mathscr{E}$
is \emph{$L$-Lipschitz} if $\|\Psi(x)-\Psi(x')\|\le L\|x-x'\|$ for all $x,x'\in\mathcal{X}$. Throughout the discussion, the capitalized letter $X$ denotes the random variable, while the uncapitalized one denotes one realization.

{\noindent\textbf{Convex Game.} Consider a continuous-kernel game defined by the tuple $\mathcal{G}=(\mathcal{N}, \{\mathcal{Y}_i\}_{i\in \mathcal{N}}, \{\psi_i\}_{i\in \mathcal{N}})$, where $\mathcal{N}$ is the player set and $\mathcal{Y}_i\subset \R^{n_i}$ is the strategy set of the $i$-th player, assumed to be nonempty, closed, and convex. Under a joint strategy $y=(y_i, y_{-i})\in \mathcal{Y}\triangleq \prod_{i\in \mathcal{N}}\mathcal{Y}_i\subset \R^{n}, n=\sum_{i\in \mathcal{N}}n_i$, player $i$'s cost is determined by the twice continuously differentiable function $\psi_i(y_i, y_{-i})$, which is convex in $y_i$ for fixed $y_{-i}$. Denote by $\nabla_i \psi_i(y_i, y_{-i})$ the partial derivative with respect to $y_i$ and define the \textit{pseudo-gradient} operator as $\Psi(y)=\operatorname{col}(\nabla_i \psi_i(y_i, y_{-i}))_{i\in \mathcal{N}}$. A profile $y^\star=(y^\star_i, y_{-i}^\star)\in \mathcal{Y}$ is a Nash equilibrium (\textsc{ne}) if no player can reduce its cost by deviating unilaterally. Under the stated convexity and differentiability conditions, this no-deviation condition is equivalent to the variational inequality}
\begin{equation}
\label{eq:vi}
    {\langle \Psi(y^\star), y-y^\star \rangle\geq 0, \quad \forall y\in \mathcal{Y}.}
\end{equation}

\noindent\textbf{Predictive Mirror Descent.} Mirror descent (\textsc{md}) refers to a class of first-order methods tailored to the problem geometry via a Legendre potential function \cite{nemirovsky83md}. Suppose each player $i$ is equipped with such a potential function $\phi_i: \mathcal{Y}_i\rightarrow\R$, whose Fenchel conjugate is denoted by $\phi_i^*$. With stepsize $h>0$, player $i$'s mirror descent step is
\begin{equation}
\label{eq:md-discrete}
y_i^t=\argmin_{y_i\in \mathcal{Y}_i} \{ h\langle \nabla_i\psi_i(y^{t-1}), y_i \rangle + D_{\phi_i}(y_i, y_{i}^{t-1})\},
\end{equation}
which admits the dual-space representation under the mirror map $\nabla \phi_i: \mathcal{Y}_i\rightarrow \mathcal{X}_i$ and $\nabla \phi^*_i: \mathcal{X}_i\rightarrow \mathcal{Y}_i$:
$x_i^t=x_i^{t-1}-h\nabla_i\psi_i(\nabla\phi_i^*(x_i^{t-1}), \nabla\phi_{-i}^*(x_{-i}^{t-1})).$
The leading continuous-time limit of the concatenated update is
\begin{equation}
\label{eq:md-cont}
        \dot{x}(t)=-\Psi(\nabla\phi^*(x(t))),\quad  y(t)  = \nabla\phi^*(x(t)),
\end{equation}
where $x(t)$ and $y(t)$ are within the dual and primal space, respectively;  $\Psi(y)\triangleq \operatorname{col}(\nabla_i\psi_i(y_i, y_{-i}))_{i\in \mathcal{N}}$ and $\nabla\phi^*(x)\triangleq \operatorname{col}(\nabla\phi^*_i(x_i))_{i\in \mathcal{N}}$.
Even in static games, the mirror flow can exhibit recurrent trajectories rather than last-iterate convergence \cite{mertikopoulos2018cycles}, motivating predictive queries.

A broad class of predictive mirror methods modifies this update by first constructing a method-dependent auxiliary query point $\hat x^t$ in the dual space and then evaluating the pseudo-gradient at the associated profile $\hat y^t$. At this stage, the rule generating $\hat x^t$ is left open; the stochastic mirror differential game below will generate a particular such query endogenously.
\begin{subequations}
    \label{eq:pred-md-discrete}
\begin{align}
\hat y_i^t&=\nabla\phi_i^*(\hat x_i^t), \\
x_i^t&=x_i^{t-1}-h\nabla_i\psi_i(\hat y^t), \qquad y_i^t=\nabla\phi_i^*(x_i^t).
\end{align}
\end{subequations}
Mirror-prox/extra-gradient uses a one-step query correction, while optimistic mirror descent uses a history-based gradient extrapolation. These mechanisms are not identical algorithms, but they share the same predictive-query principle: the mirror update is driven by feedback at an auxiliary profile rather than only at the realized iterate.

This commonality is also visible in their small-stepsize expansions. For mirror-prox/extra-gradient, the correction arises by expanding the intermediate query $\hat x=x-h\Psi(y)$ and $\hat y=\nabla\phi^*(\hat x)$; optimistic mirror descent yields the same term through gradient extrapolation. Although the ordinary $h\to0$ limit of these methods is the vanilla \textsc{md} flow \eqref{eq:md-cont}, their first modified equations contain a predictive correction of the form
$
\dot x
=-\Psi(y)+h\,D\Psi(y)\nabla^2\phi^*(x)\Psi(y)+O(h^2),
y=\nabla\phi^*(x).$
Thus, the stabilizing effect of prediction is a next-order correction rather than a different leading mirror flow. This motivates the construction below, where the auxiliary query is generated endogenously through a stochastic mirror differential game.

{\noindent\textbf{Stochastic Mirror Differential Game.} We approach \textsc{pmd} through variational optimal control, drawing on the Brezis--Ekeland principle \cite{brezis1976principe}, which characterizes gradient descent in convex programming through an associated optimal control problem. This viewpoint was later extended to single-agent \textsc{md} \cite{tzen2023variational} and multi-agent \textsc{md} in monotone games \cite{pan2024variational}.}

{Leveraging the Brezis-Ekeland principle, we construct a stochastic differential game below and prove in Sec.~\ref{sec:variation} that its closed-loop equilibrium induces a concrete realization of the predictive mirror template, with the auxiliary memory dynamics specified by the corrective channel.}
{Because $\mathcal G$ is static, its reference equilibrium $y^\star \in \mathcal Y$ is time-independent; the horizon $[0,T]$ belongs to the auxiliary differential game. We augment the stochastic state as $(Z_t=(X_t, E_t)\in\mathcal X\times\mathcal X)$, where $X_t$ is the concatenated mirror dual state of all players and $E_t$ is the auxiliary prediction state.} The predictive dual state is 
$\hat X_t\triangleq X_t+E_t$. Under the mirror map, $Y_t=\nabla\phi^*(X_t)$ and $\hat Y_t=\nabla\phi^*(\hat X_t)$. Our analysis rests on a complete filtered probability space
$(\Omega,\mathcal{F},\mathbb{F},\mathbb{P})$ with filtration
$\mathbb{F}\triangleq\{\mathcal{F}_t\}_{t\ge 0}$ satisfying the usual conditions \cite{pan2024variational}.
All state trajectories, Brownian motions, and feedback controls are assumed to be $\mathbb{F}$-adapted.

Each player chooses a strategic control $U_{i,t}\in\mathcal X_i$ and a corrective control $V_{i, t}\in\mathcal X_i$ that jointly control the following stochastic dynamics
\begin{equation}
\label{eq:spmdg-dynamics}
\begin{aligned}
\mathrm{d}X_{i,t} &= U_{i,t}\mathrm{d}t + \Sigma_i(Z_t)\mathrm{d}W_{i,t},\\
\mathrm{d}E_{i,t} &= V_{i,t}\mathrm{d}t + \widetilde\Sigma_i(Z_t)\mathrm{d}\widetilde W_{i,t},
\end{aligned}
\qquad i\in\mathcal N,
\end{equation}
{where $\{W_i,\widetilde W_i\}_{i\in\mathcal N}$ are mutually independent standard Brownian motions and the diffusion matrices $\Sigma_i$, $\widetilde\Sigma_i$ are measurable maps.}

For any fixed profile $y_{-i}\in\mathcal{Y}_{-i}$, define the partial Fenchel conjugate of player $i$'s {strategic} control $u_i$ by
$\psi_i^*(u_i\mid y_{-i})
\triangleq
\sup_{y_i\in\mathcal Y_i}
\big\{\langle u_i,y_i\rangle-\psi_i(y_i,y_{-i})\big\}$. Consider the Fenchel coupling (see Preliminaries) between the predictive state and the control:
\begin{align*}
\mathcal{FC}_{\psi_i(\cdot,\hat{y}_{-i})}(\hat{y}_i, -u_i)
&= \psi_i(\hat y_i,\hat y_{-i})
+ \psi_i^*(-u_i\mid \hat y_{-i})\\
&\quad + \langle u_i,\hat{y}_i\rangle,
\end{align*}
{which is nonnegative by the Fenchel--Young inequality and attains zero if and only if $u_i=-\nabla_i\psi_i(\hat{y})$.}

{We exploit this property and design the game stage cost by enforcing the Fenchel principle on both the strategic-control and corrective-control channels:}
\begin{equation}
\label{eq:spmdg-stage-cost}
\begin{aligned}
c_i(z,& u,v)
=
\psi_i(\hat y_i,\hat y_{-i})
+ \psi_i^*(-u_i\mid \hat y_{-i})
+ \langle u_i,y_i^\star\rangle \\
 + \alpha\eta\, & 
\mathcal{FC}_{\psi_i(\cdot,y_{-i})}
\!\left(
 y_i,
 -\frac{v_i+\alpha e_i}{\alpha\eta}
\right) - \langle v_i,\hat y_i-y_i^\star\rangle,
\end{aligned}
\end{equation}
{where $z=(x,e)$, $y=\nabla\phi^*(x)$, $\hat y=\nabla\phi^*(x+e)$, and $\alpha,\eta>0$ are design parameters. Intuitively, the strategic channel drives the mirror state along the pseudo-gradient evaluated at the predicted profile, while the corrective channel updates the auxiliary state $e_i$ through a memory filter with relaxation rate $\alpha$ and realized-gradient scale $\eta$. In equilibrium, this filter drives $e_i$ toward $-\eta\nabla_i\psi_i(y)$, so the predictive state $\hat x_i=x_i+e_i$ is a displaced dual variable whose magnitude is set by $\eta$ and whose relaxation speed is set by $\alpha$.}

{For the same reference equilibrium $y^\star$, the terminal cost of player $i$ is defined on the augmented dual state: $q_i(Z_T)=D_{\phi_i^*}(\hat X_{i,T},x_i^\star)$, $x_i^\star=\nabla\phi_i(y_i^\star)$. Accordingly, player $i$ seeks to minimize the expected cumulative cost under joint controls ${U}\triangleq \{U_t=(U_{i,t})_{i\in \mathcal{N}}\}_{t\in[0,T]}$, ${V}\triangleq \{V_t=(V_{i,t})_{i\in \mathcal{N}}\}_{t\in[0,T]}$}
\begin{equation}
\label{eq:spmdg-cost}
J_i(z)
=\mathbb E\left[\int_0^T c_i(Z_t,U_t,V_t)\mathrm{d}t + q_i(Z_T)\middle|Z_0=z\right].
\end{equation}

\section{Equilibrium Mirror Path and Local Stochastic Stability}
\label{sec:variation}

{The auxiliary game is useful only insofar as its equilibrium feedback determines an actual mirror trajectory. We first compute this trajectory and then ask whether it stays close to $y^\star$ when the feedback is noisy. Locally, the same Bregman energy that defines the mirror geometry serves as the stability certificate. Relative to the variational representation of vanilla mirror play in \cite{pan2024variational}, the corrective Fenchel channel generates the predictive memory state.}

\subsection{Equilibrium feedback synthesis}
\label{sec:equilibrium-synthesis}

Fix a reference equilibrium $y^\star\in\mathcal Y$ and let
$x^\star=\nabla\phi(y^\star)$.
For player $i\in\mathcal N$, consider the candidate predictive-state {value ansatz}
$V_i(z)\triangleq D_{\phi_i^*}(x_i+e_i,x_i^\star)$ for $z=(x,e)$.
Since $\hat x_i=x_i+e_i$ and $\hat y_i=\nabla\phi_i^*(\hat x_i)$,
its gradients with respect to both control channels coincide:
$\nabla_{x_i}V_i(z)=\hat y_i-y_i^\star$ and
$\nabla_{e_i}V_i(z)=\hat y_i-y_i^\star$. Hence, the control-dependent part of player $i$'s Hamiltonian is
$\langle \hat y_i-y_i^\star,u_i\rangle + \langle \hat y_i-y_i^\star,v_i\rangle + c_i(z,u_i,v_i)$.
Substituting \eqref{eq:spmdg-stage-cost} and collecting the control-dependent terms yields the following decomposition.

\begin{lemma}
\label{lem:two-fenchel-hamiltonian}
For every player $i\in\mathcal N$ and state $z=(x,e)$, the control-dependent Hamiltonian equals
\begin{equation}
\label{eq:control-hamiltonian}
\begin{aligned}
\mathscr H_i(z,u_i,v_i)
&= 
\mathcal{FC}_{\psi_i(\cdot,\hat y_{-i})}(\hat y_i,-u_i) \\
\quad + \alpha\eta\,
& \mathcal{FC}_{\psi_i(\cdot,y_{-i})}
\!\left(
 y_i,
 -\frac{v_i+\alpha e_i}{\alpha\eta}
\right) 
 + \alpha\langle e_i,y_i\rangle,
\end{aligned}
\end{equation}
where the final term is independent of $(u_i,v_i)$. 

Fix $i\in\mathcal N$ and a state $z=(x,e)$. Suppose that for every fixed $\zeta_{-i}\in\mathcal Y_{-i}$, the map
$y_i\mapsto \psi_i(y_i,\zeta_{-i})$ is proper, closed, convex, and differentiable on
$\operatorname{int}(\dom(\phi_i))$.
Then the pointwise minimizers of \eqref{eq:control-hamiltonian} satisfy
\begin{equation}
\label{eq:u-v-star}
u_i^\star = -\nabla_{y_i}\psi_i(\hat y_i,\hat y_{-i}),
\quad
v_i^\star = -\alpha e_i-\alpha\eta\nabla_{y_i} \psi_i(y).
\end{equation}
\end{lemma}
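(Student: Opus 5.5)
The plan is a direct computation followed by a pointwise minimization that uses the equality case of Fenchel--Young. \textbf{Step 1 (decomposition).} I would substitute the stage cost \eqref{eq:spmdg-stage-cost} into the expression $\langle \hat y_i-y_i^\star,u_i\rangle+\langle \hat y_i-y_i^\star,v_i\rangle+c_i(z,u_i,v_i)$ derived just before the lemma, and group terms by channel.

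In the strategic channel, $\langle \hat y_i-y_i^\star,u_i\rangle+\langle u_i,y_i^\star\rangle=\langle u_i,\hat y_i\rangle$. Together with $\psi_i(\hat y)+\psi_i^*(-u_i\mid\hat y_{-i})$, this is exactly $\mathcal{FC}_{\psi_i(\cdot,\hat y_{-i})}(\hat y_i,-u_i)$ by the definition in the Preliminaries.

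In the corrective channel, the term $-\langle v_i,\hat y_i-y_i^\star\rangle$ in $c_i$ cancels the value-gradient term $\langle \hat y_i-y_i^\star,v_i\rangle$. This cancellation is the reason the ansatz $V_i$ was chosen with identical gradients in $x_i$ and $e_i$. What remains is the weighted Fenchel coupling $\alpha\eta\,\mathcal{FC}_{\psi_i(\cdot,y_{-i})}(y_i,-(v_i+\alpha e_i)/(\alpha\eta))$.

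Expanding that coupling as $\alpha\eta\psi_i(y)+\alpha\eta\psi_i^*(\cdot)+\langle v_i+\alpha e_i,y_i\rangle$ isolates the piece $\alpha\langle e_i,y_i\rangle$, which depends only on the state. It appears in \eqref{eq:control-hamiltonian} purely as state-dependent bookkeeping. Whatever its exact placement, it does not involve $(u_i,v_i)$ and so cannot affect the minimizers. I would state the identity explicitly as holding up to $(u_i,v_i)$-independent terms.

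\textbf{Step 2 (separability and nonnegativity).} After Step 1, $\mathscr H_i$ is a sum of a function of $u_i$ alone, a function of $v_i$ alone, and a control-free term. Joint minimization therefore reduces to two independent problems. By Fenchel--Young, each coupling term is nonnegative. Hence any control that makes both couplings vanish is a global minimizer, and the problem becomes characterizing the zero sets of the two couplings.

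\textbf{Step 3 (equality case).} For a proper, closed, convex $f$, $\mathcal{FC}_f(x,w)=0$ holds iff $w\in\partial f(x)$. Apply this with $f=\psi_i(\cdot,\hat y_{-i})$, $x=\hat y_i$, $w=-u_i$, and with $f=\psi_i(\cdot,y_{-i})$, $x=y_i$, $w=-(v_i+\alpha e_i)/(\alpha\eta)$. If the subdifferentials are the singletons $\{\nabla_{y_i}\psi_i\}$, the conditions read $u_i=-\nabla_{y_i}\psi_i(\hat y)$ and $v_i+\alpha e_i=-\alpha\eta\nabla_{y_i}\psi_i(y)$. These give \eqref{eq:u-v-star}, and they also give uniqueness.

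\textbf{Main obstacle.} The delicate point is justifying that the subdifferential is exactly the gradient. The partial conjugate $\psi_i^*(\cdot\mid\zeta_{-i})$ is a supremum over $\mathcal Y_i$, so it is really the conjugate of $\psi_i(\cdot,\zeta_{-i})+\iota_{\mathcal Y_i}$. Its subdifferential at a boundary point would include a normal-cone term.

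I would handle this with the Legendre property of $\phi_i$. Since $\nabla\phi_i^*$ maps the dual space into $\operatorname{int}(\dom\phi_i)$, both $y_i=\nabla\phi_i^*(x_i)$ and $\hat y_i=\nabla\phi_i^*(x_i+e_i)$ are interior points. There the normal cone is $\{0\}$, and the differentiability hypothesis gives $\partial\psi_i(\cdot,\zeta_{-i})=\{\nabla_{y_i}\psi_i\}$. Closedness and properness, also from the hypothesis, ensure that the equality case of Fenchel--Young holds in both directions.
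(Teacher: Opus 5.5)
Your proposal is correct and follows the paper's proof essentially step for step: the same channel-wise substitution, the same cancellation of the $v_i$ shadow price against $-\langle v_i,\hat y_i-y_i^\star\rangle$, and the same Fenchel--Young equality case; your Legendre/interior-point argument (normal cone $\{0\}$ at $y_i,\hat y_i\in\operatorname{int}(\dom\phi_i)$) makes explicit a step the paper leaves implicit in its differentiability hypothesis. Your caution about $\alpha\langle e_i,y_i\rangle$ is also justified: the direct computation gives exactly the sum of the two couplings, so that term already sits inside the second coupling rather than being an extra summand, and it is harmless for the minimizers.
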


\begin{proof}
For the strategic channel,
$\langle \hat y_i-y_i^\star,u_i\rangle
+\psi_i(\hat y_i,\hat y_{-i})
+\psi_i^*(-u_i\mid \hat y_{-i})
+\langle u_i,y_i^\star\rangle
=\mathcal{FC}_{\psi_i(\cdot,\hat y_{-i})}(\hat y_i,-u_i)$.
For the corrective channel, the co-state correction cancels the $v_i$ shadow price induced by the predictive-state value because
$\langle \hat y_i-y_i^\star,v_i\rangle-\langle v_i,\hat y_i-y_i^\star\rangle=0$.
Therefore, the remaining $v_i$-dependence is exactly
$\alpha\eta\,\mathcal{FC}_{\psi_i(\cdot,y_{-i})}\!\left(y_i,-\frac{v_i+\alpha e_i}{\alpha\eta}\right)$,
and expanding this Fenchel coupling contributes the state-only term $\alpha\langle e_i,y_i\rangle$.

By Fenchel--Young inequality,
$\mathcal{FC}_{\psi_i(\cdot,\hat y_{-i})}(\hat y_i,-u_i)\ge 0$
with equality if and only if
$-u_i=\nabla_{y_i}\psi_i(\hat y_i,\hat y_{-i})$.
Likewise,
$\mathcal{FC}_{\psi_i(\cdot,y_{-i})}\!\left(y_i,-\frac{v_i+\alpha e_i}{\alpha\eta}\right)\ge 0$
with equality if and only if
$-\frac{v_i+\alpha e_i}{\alpha\eta}=\nabla_{y_i}\psi_i(y_i,y_{-i})$.
Since the final term in \eqref{eq:control-hamiltonian} is independent of $(u_i,v_i)$, the minimizers are exactly those that make both Fenchel couplings vanish, which yields \eqref{eq:u-v-star}.
\end{proof}

\begin{theorem}
\label{thm:mirror-path}
Consider the \textsc{smdg} defined by \eqref{eq:spmdg-dynamics}--\eqref{eq:spmdg-cost}.
Let $(U^\star,V^\star)$ be a closed-loop equilibrium profile whose Hamiltonian minimization over $(U_i,V_i)$ is attained pointwise for every player relative to the predictive-state value ansatz $V_i(z)=D_{\phi_i^*}(x_i+e_i,x_i^\star)$.
Then the equilibrium feedback laws satisfy
\begin{equation}
\label{eq:equilibrium-feedback}
U_{i,t}^\star = -\nabla_{y_i}\psi_i(\hat Y_t^\star),
\ \
V_{i,t}^\star = -\alpha E_{i,t}^\star-\alpha\eta\nabla_{y_i}\psi_i(Y_t^\star),
\end{equation}
and the induced equilibrium trajectory $Z_t^\star=(X_t^\star,E_t^\star)$ evolves according to
\begin{equation}
\label{eq:pmp}
\begin{aligned}
dX_t^\star &= -\Psi(\hat Y_t^\star)dt + \Sigma(Z_t^\star)dW_t,\\
dE_t^\star &= \big[-\alpha E_t^\star-\alpha\eta\Psi(Y_t^\star)\big]dt + \widetilde\Sigma(Z_t^\star)d\widetilde W_t,
\end{aligned}
\end{equation}
where
$Y_t^\star=\nabla\phi^*(X_t^\star)$ and $\hat Y_t^\star=\nabla\phi^*(X_t^\star+E_t^\star)$.
\end{theorem}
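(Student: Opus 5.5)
The plan is to reduce the theorem to Lemma~\ref{lem:two-fenchel-hamiltonian}, applied pointwise along the equilibrium trajectory, and then substitute the resulting feedback into the controlled dynamics \eqref{eq:spmdg-dynamics}.

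\textbf{Step 1: the Hamiltonian.} Fix a player $i$ and a time $t$. By hypothesis, at the state $z=Z_t^\star$ the equilibrium controls $(U_{i,t}^\star,V_{i,t}^\star)$ minimize the Hamiltonian built from the value ansatz $V_i(z)=D_{\phi_i^*}(x_i+e_i,x_i^\star)$, with the other players' controls frozen at their equilibrium values. The Hamiltonian has three parts:
\begin{itemize}
\item the drift terms $\langle\nabla_{x_i}V_i,u_i\rangle+\langle\nabla_{e_i}V_i,v_i\rangle$;
\item the second-order (Itô) term $\tfrac12\operatorname{tr}(\cdot)$, which involves only the diffusions $\Sigma_i,\widetilde\Sigma_i$ and the Hessian of $V_i$;
\item the stage cost $c_i$.
\end{itemize}
Two observations isolate the control-dependent part.
\begin{itemize}
\item The diffusions depend on $z$ but not on the controls, so the Itô term is independent of $(u_i,v_i)$.
\item $V_i$ depends only on $(x_i,e_i)$, so the other players' controls enter player $i$'s Hamiltonian only through state-only terms. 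The gradients of $V_i$ with respect to $x_i$ and $e_i$ are both $\hat y_i-y_i^\star$.
\end{itemize}
Hence the control-dependent part is exactly the expression that Lemma~\ref{lem:two-fenchel-hamiltonian} rewrites as \eqref{eq:control-hamiltonian}.

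\textbf{Step 2: the feedback laws.} By the lemma, the two Fenchel couplings are nonnegative and vanish only at \eqref{eq:u-v-star}. The pointwise minimizer is therefore unique, and it is the one given there. Evaluating at $z=Z_t^\star$, so that $y=Y_t^\star=\nabla\phi^*(X_t^\star)$ and $\hat y=\hat Y_t^\star=\nabla\phi^*(X_t^\star+E_t^\star)$, gives \eqref{eq:equilibrium-feedback}.

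\textbf{Step 3: the dynamics.} Substitute these feedbacks into \eqref{eq:spmdg-dynamics}. Stacking over $i\in\mathcal N$, with $\Psi=\operatorname{col}(\nabla_i\psi_i)$ and block-diagonal $\Sigma,\widetilde\Sigma$, gives \eqref{eq:pmp}. The drift of $X^\star$ is $-\Psi(\hat Y^\star)$, and the drift of $E^\star$ is $-\alpha E^\star-\alpha\eta\Psi(Y^\star)$.

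\textbf{Main obstacle.} The delicate point is justifying that the minimization is genuinely pointwise and unilateral. Three things need checking.
\begin{itemize}
\item Player $i$'s Hamiltonian must not depend on $u_{-i},v_{-i}$ in a way that couples the players. This holds because $V_i$ depends only on $(x_i,e_i)$ and the stage cost $c_i$ depends only on $(u_i,v_i)$.
\item The minimizers must lie in $\mathcal X_i$. This requires the partial gradients $\nabla_{y_i}\psi_i$ to be finite at $\hat y$ and $y$. That is guaranteed because $\hat y$ and $y$ lie in $\operatorname{int}\dom\phi_i$, since they are images of $\nabla\phi^*$, and $\psi_i$ is differentiable there.
\item Well-posedness of the closed-loop SDE is not part of the claim, because the theorem presupposes an equilibrium profile. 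I would therefore state this as an assumption rather than prove it.
\end{itemize}
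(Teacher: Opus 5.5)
Your proposal is correct and follows essentially the same route as the paper's sketch. Both invoke Lemma~\ref{lem:two-fenchel-hamiltonian} to identify the unique pointwise minimizers of the two Fenchel couplings, evaluate them along $Z_t^\star$, and substitute them into \eqref{eq:spmdg-dynamics}; your explicit checks that the It\^o term is control-independent and that player $i$'s Hamiltonian contains no other players' controls make precise what the paper takes for granted.
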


\begin{proof}[Proof Sketch]
{The theorem is the closed-loop realization of the two-channel Hamiltonian synthesis developed above. We follow the same verification logic used in \cite{pan2024variational}. For simplicity, we omit the full viscosity solution characterization here. By Lemma~\ref{lem:two-fenchel-hamiltonian}, the control-dependent Hamiltonian separates into a predictive strategic Fenchel coupling in $u_i$ and a realized corrective Fenchel coupling in $v_i$, up to a term independent of the controls. We can then identify the unique pointwise minimizers:}
$
u_i^\star=-\nabla_{y_i}\psi_i(\hat y),
v_i^\star=-\alpha e_i-\alpha\eta\nabla_{y_i}\psi_i(y).$
Since Hamiltonian minimization is attained pointwise along the equilibrium profile, these minimizers coincide with the equilibrium feedback laws, yielding \eqref{eq:equilibrium-feedback}. 

Substituting these feedback laws into the controlled state equations \eqref{eq:spmdg-dynamics} gives \eqref{eq:pmp}.
\end{proof}

\subsection{Local stability assumptions and stopped process}

We now analyze the equilibrium mirror path locally around the equilibrium $y^\star$.
Define the Lyapunov energy
\begin{equation}
\label{eq:lyap}
\mathcal V(x,e)
\triangleq 
D_{\phi^*}(x,x^\star)+\frac{\kappa}{2}\|e\|^2,
\qquad \kappa>0.
\end{equation}
{Here $\kappa$ simply fixes the relative scale between the mirror-state error and the auxiliary-memory error.}

\begin{assumption}[Local mirror regularity, Lipschitzness, and bounded diffusion]
\label{ass:local-reg}
There exist neighborhoods $\mathcal X_\star\subseteq \mathcal X$ of $x^\star=\nabla\phi(y^\star)$ and $\mathcal U_\star\subseteq \mathcal Y$ of $y^\star$, and constants
$m_\phi,M_\phi,L_\Psi,B_\Psi,\sigma_X,\sigma_E>0$, such that 
\begin{align}
\label{eq:phi-local-bounds}
m_\phi I \preceq \nabla^2\phi^* & (x)  \preceq M_\phi I,
&&\forall x\in \mathcal X_\star,
\\
\label{eq:psi-local-bounds}
\|\Psi(y)-\Psi(y')\| &\le L_\Psi\|y-y'\|,
&&\forall y,y'\in \mathcal U_\star,
\end{align}
and whenever $x\in\mathcal X_\star$ and $x+e\in\mathcal X_\star$, the dual states
$y=\nabla\phi^*(x)$ and $\hat y=\nabla\phi^*(x+e)$ belong to $\mathcal U_\star$, while the diffusion coefficients satisfy
$\|\Sigma(x,e)\|_F \le \sigma_X$, $\|\widetilde\Sigma(x,e)\|_F \le \sigma_E$, and $\|\Psi(y)\| \le B_\Psi$.
\end{assumption}

\begin{assumption}[Local Bregman growth condition]
\label{ass:local-vs}
There exists a constant $c_{\mathrm{vs}}>0$ such that for every $x\in\mathcal X_\star$ with
$y=\nabla\phi^*(x)\in\mathcal U_\star$,
\begin{equation}
\label{eq:local-vs}
\langle \Psi(y),y-y^\star\rangle
\ge
c_{\mathrm{vs}}\,D_{\phi^*}(x,x^\star).
\end{equation}
\end{assumption}

{Assumption~\ref{ass:local-vs} gives a local error-growth scale: near $y^\star$, the variational gap controls the dual Bregman distance to equilibrium. This is the coercivity used in the drift estimate. Note that such an assumption is a direct extension of the second-order sufficiency in variational stability, e.g., see \cite[Assumption 2]{mertikopoulos21opt-md}, and the second-order test in \cite[Assumption 4]{shutian23erm}, where the Euclidean distance is replaced by the Bregmann divergence.} For $x,x+e\in\mathcal X_\star$, let $C_\phi\triangleq {2M_\phi^2}/{m_\phi}$; then
\begin{equation}
\label{eq:y-d-bound}
\|y-y^\star\|^2 \le C_\phi D_{\phi^*}(x,x^\star),\;
\|\hat y-y\| \le M_\phi\|e\|, 
\end{equation}

{Fix $r>0$ and define the sublevel set
$\mathcal K_r \triangleq  \big\{(x,e): \mathcal V(x,e)\le r\big\}$.
Choose $r$ so that $x\in\mathcal X_\star$ and $x+e\in\mathcal X_\star$ whenever $(x,e)\in\mathcal K_r$.
Define the exit time $\tau_r \triangleq  \inf\big\{t\ge 0 : (X_t,E_t)\notin \mathcal K_r\big\}$, with the convention $\inf\varnothing=+\infty$. We use the stopped process
$Z_{t\wedge\tau_r}=(X_{t\wedge\tau_r},E_{t\wedge\tau_r})$ to localize the drift calculation. The resulting bounds apply to trajectories that remain in $\mathcal K_r$ over the finite horizon, while Theorem~\ref{thm:expected-terminal} accounts for the exit probability.}

\subsection{Local drift, expected terminal error, and concentration}

\begin{lemma}[Local It\^o-Lyapunov drift]
\label{lem:local-drift}
Under Assumptions~\ref{ass:local-reg} and \ref{ass:local-vs}, {suppose the Lyapunov weight $\kappa>0$ satisfies}
\begin{equation}
\label{eq:drift-feasibility}
\kappa\alpha>\frac{C_{\mathrm{mix}}^2}{2c_{\mathrm{vs}}},
\qquad
C_{\mathrm{mix}}\triangleq L_\Psi M_\phi\sqrt{C_\phi}.
\end{equation}
For any parameter
$
\mu\in\left(\frac{1}{\kappa\alpha},\frac{2c_{\mathrm{vs}}}{C_{\mathrm{mix}}^2}\right)
$
define
\begin{equation}
\label{eq:drift-constants}
\begin{aligned}
c_D \triangleq  c_{\mathrm{vs}}-\frac{\mu C_{\mathrm{mix}}^2}{2}>0, \quad & 
c_E \triangleq  \frac{\kappa\alpha}{2}-\frac{1}{2\mu}>0.
\end{aligned}
\end{equation}
Then on $[0,\tau_r]$ the infinitesimal generator $\mathcal L$ of $\mathcal V$ satisfies
\begin{equation}
\label{eq:local-drift-split}
\mathcal L\mathcal V(X_t,E_t)
\le
-c_D D_{\phi^*}(X_t,x^\star)-c_E\|E_t\|^2+b \quad a.s.,
\end{equation}
where
$ 
b \triangleq  \frac{M_\phi}{2}\sigma_X^2 + \frac{\kappa}{2}\sigma_E^2 + \frac{\kappa\alpha\eta^2 B_\Psi^2}{2}.
$
Consequently, for $\lambda\triangleq \min\left\{c_D,\frac{2c_E}{\kappa}\right\}$,
\begin{equation}
\label{eq:local-drift-main}
\begin{aligned}
\mathcal L\mathcal V(X_t,E_t)
\le
-\lambda\mathcal V(X_t,E_t)+b.
\end{aligned}
\end{equation}
\end{lemma}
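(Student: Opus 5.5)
The plan is to apply It\^o's formula to $\mathcal V$ along the equilibrium dynamics \eqref{eq:pmp} of Theorem~\ref{thm:mirror-path}. We then bound the drift term by term using Assumptions~\ref{ass:local-reg}--\ref{ass:local-vs} and the auxiliary inequalities \eqref{eq:y-d-bound}.

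\textbf{Localization.} For $t\le\tau_r$ the stopped state lies in $\mathcal K_r$, by path continuity and closedness of the sublevel set. By the choice of $r$, both $X_t$ and $X_t+E_t$ then lie in $\mathcal X_\star$. Hence $Y_t,\hat Y_t\in\mathcal U_\star$, and all local constants apply. Moreover $\phi^*$ is $C^2$ on $\mathcal X_\star$, by \eqref{eq:phi-local-bounds}, so It\^o's formula is legitimate for the stopped process.

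\textbf{Generator.} The derivatives of $\mathcal V$ are
\[
\nabla_x D_{\phi^*}(x,x^\star)=\nabla\phi^*(x)-\nabla\phi^*(x^\star)=y-y^\star,
\qquad
\nabla_x^2 D_{\phi^*}(x,x^\star)=\nabla^2\phi^*(x)\preceq M_\phi I,
\]
together with $\nabla_e\mathcal V=\kappa e$ and $\nabla_e^2\mathcal V=\kappa I$. The Brownian motions are independent and $\mathcal V$ is additively separable in $(x,e)$, so there are no cross second-order terms. This gives
\[
\mathcal L\mathcal V=-\langle y-y^\star,\Psi(\hat y)\rangle-\kappa\alpha\|e\|^2-\kappa\alpha\eta\langle e,\Psi(y)\rangle+\tfrac12\operatorname{tr}\!\big(\Sigma^\top\nabla^2\phi^*(x)\Sigma\big)+\tfrac{\kappa}{2}\|\widetilde\Sigma\|_F^2 .
\]
The two trace terms are bounded by $\frac{M_\phi}{2}\sigma_X^2+\frac{\kappa}{2}\sigma_E^2$.

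\textbf{Strategic term (the key step).} Write $\Psi(\hat y)=\Psi(y)+(\Psi(\hat y)-\Psi(y))$. Assumption~\ref{ass:local-vs} handles the $\Psi(y)$ part, giving $-\langle y-y^\star,\Psi(y)\rangle\le -c_{\mathrm{vs}}D_{\phi^*}(x,x^\star)$. For the remainder we combine Lipschitzness \eqref{eq:psi-local-bounds} with \eqref{eq:y-d-bound}:
\[
\langle y-y^\star,\Psi(y)-\Psi(\hat y)\rangle\le \sqrt{C_\phi D_{\phi^*}}\,L_\Psi M_\phi\|e\| = C_{\mathrm{mix}}\sqrt{D_{\phi^*}}\,\|e\|.
\]
Young's inequality with weight $\mu$ then bounds this by $\frac{\mu C_{\mathrm{mix}}^2}{2}D_{\phi^*}+\frac{1}{2\mu}\|e\|^2$.

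\textbf{Memory term.} Using $\|\Psi(y)\|\le B_\Psi$ and Young's inequality,
\[
-\kappa\alpha\eta\langle e,\Psi(y)\rangle\le \tfrac{\kappa\alpha}{2}\|e\|^2+\tfrac{\kappa\alpha\eta^2B_\Psi^2}{2}.
\]
This leaves a net $-\frac{\kappa\alpha}{2}\|e\|^2$ from the memory channel.

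\textbf{Collecting terms.} The coefficient of $\|e\|^2$ is $-(\frac{\kappa\alpha}{2}-\frac{1}{2\mu})=-c_E$. The coefficient of $D_{\phi^*}$ is $-(c_{\mathrm{vs}}-\frac{\mu C_{\mathrm{mix}}^2}{2})=-c_D$. The constant is exactly $b$, which proves \eqref{eq:local-drift-split}. Positivity of $c_D$ and $c_E$ is exactly the condition $\mu\in(1/(\kappa\alpha),2c_{\mathrm{vs}}/C_{\mathrm{mix}}^2)$, and this interval is nonempty precisely under \eqref{eq:drift-feasibility}. Finally,
\[
c_D D_{\phi^*}+c_E\|e\|^2=c_D D_{\phi^*}+\tfrac{2c_E}{\kappa}\cdot\tfrac{\kappa}{2}\|e\|^2\ge\lambda\mathcal V,
\]
which yields \eqref{eq:local-drift-main}.

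\textbf{Main obstacle.} The only delicate point is the cross term $\langle y-y^\star,\Psi(\hat y)-\Psi(y)\rangle$. It must be converted into $\sqrt{D_{\phi^*}}\,\|e\|$ through \eqref{eq:y-d-bound}, which is valid only inside the localization set. This is why every estimate is stated on $[0,\tau_r]$.
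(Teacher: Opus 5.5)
Your proposal is correct and follows essentially the same route as the paper's proof. You apply It\^o's formula to the two separable pieces of $\mathcal V$ and split $\Psi(\hat y)=\Psi(y)+(\Psi(\hat y)-\Psi(y))$, using the growth condition on the first part and Lipschitzness plus \eqref{eq:y-d-bound} and Young (weight $\mu$) on the second; Young with $B_\Psi$ on the memory cross term and the trace bounds then give exactly the paper's $c_D$, $c_E$, $b$, and $\lambda$.
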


\begin{proof}
Apply It\^o's formula to $D_{\phi^*}(X_t,x^\star)$ and $\frac{\kappa}{2}\|E_t\|^2$ along \eqref{eq:pmp}. The realized-state Bregman term contributes the drift
$-\langle Y_t-y^\star,\Psi(\hat Y_t)\rangle,$
which we split as
$
-\langle Y_t-y^\star,\Psi(Y_t)\rangle
-\langle Y_t-y^\star,\Psi(\hat Y_t)-\Psi(Y_t)\rangle.$

Assumption~\ref{ass:local-vs} controls the first term by
$-c_{\mathrm{vs}}D_{\phi^*}(X_t,x^\star)$.
The second term is bounded using \eqref{eq:y-d-bound}, the local Lipschitz constant $L_\Psi$, and Young's inequality, which yields the mixed coefficients in \eqref{eq:drift-constants}. For the corrective channel, the drift of $\frac{\kappa}{2}\|E_t\|^2$ is
$
-\kappa\alpha\|E_t\|^2-\kappa\alpha\eta\langle E_t,\Psi(Y_t)\rangle,$
and the last term is bounded by Young's inequality and the local bound $\|\Psi(Y_t)\|\le B_\Psi$. The feasibility condition \eqref{eq:drift-feasibility} guarantees that the interval
$
\left(\frac{1}{\kappa\alpha},\frac{2c_{\mathrm{vs}}}{C_{\mathrm{mix}}^2}\right)$
is nonempty, so one can choose $\mu$ such that both $c_D$ and $c_E$ are positive. The diffusion trace terms are bounded using Assumption~\ref{ass:local-reg}. The detailed coefficient bookkeeping is recorded in Appendix~\ref{app:drift-proof}.
\end{proof}

\begin{theorem}
\label{thm:expected-terminal}
Under the assumptions of Lemma~\ref{lem:local-drift}, define
\[
A_T\triangleq e^{-\lambda T}\mathcal V(Z_0)
+
\frac{b}{\lambda}\big(1-e^{-\lambda T}\big).
\]
Then, for every horizon $T\ge 0$,
\begin{equation}
\label{eq:expected-terminal-main}
\mathbb E\big[\mathcal V(Z_T)\mathbf 1_{\{\tau_r>T\}}\big]
\le A_T.
\end{equation}
In particular,
\begin{align}
&\mathbb E\big[D_{\phi^*}(X_T,x^\star)\mathbf 1_{\{\tau_r>T\}}\big]
\le A_T, \label{eq:expected-terminal-bregman} \\
& \mathbb E\big[\|E_T\|^2\mathbf 1_{\{\tau_r>T\}}\big]
\le \frac{2A_T}{\kappa}.\label{eq:expected-terminal-error}
\end{align}
Moreover,
\begin{equation}
\label{eq:exit-prob}
\mathbb P(\tau_r\le T)
\le \varepsilon_T(r)
\triangleq \min\left\{1,\frac{\mathcal V(Z_0)+bT}{r}\right\}.
\end{equation}
\end{theorem}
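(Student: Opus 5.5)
The plan is to apply It\^o's formula (Dynkin) to the time-weighted energy $e^{\lambda t}\mathcal V(Z_t)$ along the stopped process $Z_{t\wedge\tau_r}$, and to use the drift bound \eqref{eq:local-drift-main} of Lemma~\ref{lem:local-drift}. The stopping keeps the state in $\mathcal K_r$, where every coefficient is bounded. Concretely,
\[
\mathrm d\big(e^{\lambda t}\mathcal V(Z_t)\big)=e^{\lambda t}\big(\lambda\mathcal V+\mathcal L\mathcal V\big)(Z_t)\,\mathrm dt+e^{\lambda t}\,\mathrm dM_t ,
\]
where $M$ is the local martingale collecting the terms $\langle Y_t-y^\star,\Sigma\,\mathrm dW_t\rangle$ and $\kappa\langle E_t,\widetilde\Sigma\,\mathrm d\widetilde W_t\rangle$. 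On $[0,\tau_r]$ we have $(X_t,E_t)\in\mathcal K_r$, so $\|Y_t-y^\star\|$ and $\|E_t\|$ are bounded (by \eqref{eq:y-d-bound} and $\mathcal V\le r$), and the diffusions are bounded by Assumption~\ref{ass:local-reg}. Hence the stopped integrand is bounded and the stopped stochastic integral is a true martingale with zero expectation.

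Taking expectations and inserting $\lambda\mathcal V+\mathcal L\mathcal V\le b$ gives
\[
\mathbb E\big[e^{\lambda(T\wedge\tau_r)}\mathcal V(Z_{T\wedge\tau_r})\big]\le \mathcal V(Z_0)+\frac{b}{\lambda}\big(\mathbb E[e^{\lambda(T\wedge\tau_r)}]-1\big)\le \mathcal V(Z_0)+\frac b\lambda\big(e^{\lambda T}-1\big).
\]
On the event $\{\tau_r>T\}$ the left-hand integrand equals $e^{\lambda T}\mathcal V(Z_T)$. Since $\mathcal V\ge0$, dropping the complementary event only decreases the left side. Multiplying by $e^{-\lambda T}$ then yields \eqref{eq:expected-terminal-main} with the constant $A_T$.

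The two component bounds follow from nonnegativity of each summand of $\mathcal V$. Specifically, $D_{\phi^*}(X_T,x^\star)\le\mathcal V$, which gives \eqref{eq:expected-terminal-bregman}, and $\frac\kappa2\|E_T\|^2\le\mathcal V$, which gives \eqref{eq:expected-terminal-error}.

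For the exit probability \eqref{eq:exit-prob}, I will repeat the Dynkin argument without the exponential weight. Using $\mathcal L\mathcal V\le -\lambda\mathcal V+b\le b$ on $[0,\tau_r]$ gives $\mathbb E[\mathcal V(Z_{T\wedge\tau_r})]\le\mathcal V(Z_0)+bT$. By path continuity, $\mathcal V(Z_{\tau_r})=r$ on $\{\tau_r\le T\}$; the case $\mathcal V(Z_0)>r$, where $\tau_r=0$, is covered by the trivial bound $1$. Markov's inequality then gives $r\,\mathbb P(\tau_r\le T)\le \mathcal V(Z_0)+bT$, and taking the minimum with $1$ completes the estimate.

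The main technical point is justifying the martingale property and Itô's formula on the stopped process. Two things need care. First, $\mathcal V$ must be $C^2$ on a neighborhood of $\mathcal K_r$; this holds because $\phi^*$ is twice differentiable on $\mathcal X_\star$ by \eqref{eq:phi-local-bounds}. Second, continuity of the paths must place the exit point on the boundary $\{\mathcal V=r\}$. Everything else is bookkeeping.
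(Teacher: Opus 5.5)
Your proposal is correct and follows the paper's own argument. You apply It\^o's formula to the exponentially weighted stopped energy $e^{\lambda(t\wedge\tau_r)}\mathcal V(Z_{t\wedge\tau_r})$ with the drift bound of Lemma~\ref{lem:local-drift}, restrict to $\{\tau_r>T\}$ using $\mathcal V\ge 0$, and obtain the exit probability from the unweighted stopped estimate plus path continuity ($\mathcal V(Z_{\tau_r})=r$). You also correctly fill in details the paper's sketch leaves implicit: the true-martingale property from boundedness of the integrand on $\mathcal K_r$, the bound $\mathbb E[e^{\lambda(T\wedge\tau_r)}]\le e^{\lambda T}$, and the trivial case $\mathcal V(Z_0)>r$.
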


\begin{proof}[Proof Sketch]
Apply It\^o's formula to $e^{\lambda(t\wedge\tau_r)}\mathcal V(Z_{t\wedge\tau_r})$ and take expectations. Restricting the result to $\{\tau_r>T\}$ gives \eqref{eq:expected-terminal-main}; the componentwise bounds follow from the definition of $\mathcal V$. For \eqref{eq:exit-prob}, apply the unweighted stopped It\^o estimate, drop the dissipative term, and use continuity:
\[
r\,\mathbb P(\tau_r\le T)
\le \mathbb E[\mathcal V(Z_{T\wedge\tau_r})]
\le \mathcal V(Z_0)+bT.
\]
\end{proof}

\begin{theorem}
\label{thm:high-prob-terminal}
Under the assumptions of Lemma~\ref{lem:local-drift}, define $\nu_r^2 \triangleq  r\big(C_\phi\sigma_X^2 + 2\kappa\sigma_E^2\big)$.
Then for every $T\ge 0$ and every $\delta\in(0,1)$, with probability at least $1-\delta-\varepsilon_T(r)$,
\begin{equation}
\label{eq:high-prob-main}
\begin{aligned}
\mathcal V(Z_T)
&\le e^{-\lambda T}\mathcal V(Z_0)
+ \frac{b}{\lambda}(1-e^{-\lambda T}) \\
&\quad + \nu_r\sqrt{\frac{1-e^{-2\lambda T}}{\lambda}\log\frac{1}{\delta}}.
\end{aligned}
\end{equation}
Consequently, the same bound holds with $\mathcal V(Z_T)$ replaced by
$D_{\phi^*}(X_T,x^\star)$.
\end{theorem}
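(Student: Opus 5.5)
We will work with the weighted, stopped process $e^{\lambda(t\wedge\tau_r)}\mathcal V(Z_{t\wedge\tau_r})$, exactly as in the proof of Theorem~\ref{thm:expected-terminal}, but now we keep the martingale part instead of taking expectations. By It\^o's formula and Lemma~\ref{lem:local-drift}, for $t\le T$,
\begin{equation*}
e^{\lambda(t\wedge\tau_r)}\mathcal V(Z_{t\wedge\tau_r})
\le \mathcal V(Z_0)+\frac{b}{\lambda}\big(e^{\lambda(t\wedge\tau_r)}-1\big)+M_{t},
\end{equation*}
where the local martingale is
\[
M_t\triangleq\int_0^{t\wedge\tau_r}e^{\lambda s}\Big(\langle Y_s-y^\star,\Sigma(Z_s)\,dW_s\rangle+\kappa\langle E_s,\widetilde\Sigma(Z_s)\,d\widetilde W_s\rangle\Big).
\]
Here we use $\nabla_x D_{\phi^*}(x,x^\star)=\nabla\phi^*(x)-y^\star=y-y^\star$. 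On the event $\{\tau_r>T\}$ we evaluate at $t=T$ and multiply by $e^{-\lambda T}$. This gives exactly the first two terms of \eqref{eq:high-prob-main}, plus the remainder $e^{-\lambda T}M_T$. It therefore remains to show that $e^{-\lambda T}M_T\le\nu_r\sqrt{\frac{1-e^{-2\lambda T}}{\lambda}\log\frac1\delta}$ except on an event of probability at most $\delta$. The event $\{\tau_r\le T\}$ is handled by \eqref{eq:exit-prob}, and a union bound then yields the probability $1-\delta-\varepsilon_T(r)$.

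\textbf{Step 1: bound the quadratic variation.} Because the process is stopped, $(X_s,E_s)\in\mathcal K_r$ for $s\le\tau_r$. By \eqref{eq:y-d-bound} and the definition of $\mathcal V$, this gives $\|Y_s-y^\star\|^2\le C_\phi D_{\phi^*}(X_s,x^\star)\le C_\phi r$ and $\kappa^2\|E_s\|^2\le 2\kappa r$. The Brownian motions are independent and $\|\Sigma\|_F\le\sigma_X$, $\|\widetilde\Sigma\|_F\le\sigma_E$, so
\[
d\langle M\rangle_s\le e^{2\lambda s}\big(C_\phi r\sigma_X^2+2\kappa r\sigma_E^2\big)ds=e^{2\lambda s}\nu_r^2\,ds .
\]
Integrating, $\langle M\rangle_T\le \nu_r^2\frac{e^{2\lambda T}-1}{2\lambda}$ almost surely. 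The bound is deterministic, and in particular $M$ is a true martingale.

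\textbf{Step 2: concentration.} A continuous martingale with deterministically bounded quadratic variation satisfies a sub-Gaussian tail through the exponential supermartingale $\exp(\theta M_t-\frac{\theta^2}{2}\langle M\rangle_t)$ (Bernstein/Freedman). This gives
\[
\mathbb P\big(M_T\ge a\big)\le \exp\!\big(-a^2/(2\bar v)\big),\qquad \bar v=\nu_r^2\tfrac{e^{2\lambda T}-1}{2\lambda}.
\]
Setting the right-hand side equal to $\delta$ gives $a=\sqrt{2\bar v\log(1/\delta)}=\nu_r\sqrt{\frac{e^{2\lambda T}-1}{\lambda}\log\frac1\delta}$. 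Multiplying by $e^{-\lambda T}$ produces exactly $\nu_r\sqrt{\frac{1-e^{-2\lambda T}}{\lambda}\log\frac1\delta}$, which matches the stated constant. The final claim for $D_{\phi^*}(X_T,x^\star)$ follows from $D_{\phi^*}\le\mathcal V$.

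\textbf{Main obstacle.} The delicate point is the localization bookkeeping rather than the concentration itself. The drift inequality \eqref{eq:local-drift-main} and the quadratic-variation bound hold only up to $\tau_r$, so both must be applied to the stopped process. The bad event must also be split carefully as $\{\tau_r\le T\}\cup\{\tau_r>T,\ e^{-\lambda T}M_T>\text{threshold}\}$. On $\{\tau_r>T\}$ the stopped and unstopped processes coincide at $T$, which makes this decomposition legitimate. A secondary check is that the deterministic drift integral gives $\frac{b}{\lambda}(1-e^{-\lambda T})$ and does not require the multiplier to be on $[0,T]$ unstopped. This holds because $e^{\lambda(T\wedge\tau_r)}=e^{\lambda T}$ on the good event.
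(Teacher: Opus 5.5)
Your proposal is correct and follows the paper's proof. Both integrate the drift bound of Lemma~\ref{lem:local-drift} against $e^{\lambda t}$ on the stopped process, use the sublevel-set bounds to get a deterministic quadratic-variation bound $\nu_r^2$ times the exponential kernel, apply the exponential martingale inequality, and union-bound with \eqref{eq:exit-prob}. The only difference is cosmetic: you weight by $e^{\lambda s}$ and rescale by $e^{-\lambda T}$ at the end, while the paper works directly with $N_T=\int_0^{T\wedge\tau_r}e^{-\lambda(T-s)}\,dM_s$, which is the same quantity.
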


\begin{proof}[Proof Sketch]
On the event $\{\tau_r>T\}$, use the drift decomposition from Lemma~\ref{lem:local-drift} and apply the integrating factor $e^{\lambda t}$:
\[
\mathcal V(Z_T)
\le
 e^{-\lambda T}\mathcal V(Z_0)
+
\frac{b}{\lambda}(1-e^{-\lambda T})
+
N_T,
\]
where $N_T$ is the exponentially weighted martingale term. On the stopped region $\mathcal K_r$, the local bounds on $D_{\phi^*}(X_t,x^\star)$, $\|E_t\|^2$, and the diffusion coefficients imply the deterministic quadratic-variation estimate
\[
[N]_T
\le
\nu_r^2\int_0^T e^{-2\lambda(T-s)}ds.
\]
A continuous-time exponential martingale inequality \cite{karatzas1991brownian}, combined with \eqref{eq:exit-prob}, then yields \eqref{eq:high-prob-main}. The details are given in Appendix~\ref{app:high-prob-proof}.
\end{proof}

\begin{remark}[Deterministic limit]
\label{rem:deterministic-limit}
{If the diffusion channels vanish identically on the local neighborhood, then $b$ loses the diffusion terms and the martingale term disappears. In that case, on $\{\tau_r>T\}$,}
\[
\mathcal V(Z_T)
\le
 e^{-\lambda T}\mathcal V(Z_0)
+
\frac{\kappa\alpha\eta^2 B_\Psi^2}{2\lambda}(1-e^{-\lambda T}).
\]
{The residual floor comes from the crude local bound $\|\Psi(Y_t)\|\le B_\Psi$. If one additionally has $\Psi(y^\star)=0$ (e.g., $y^\star$ is an interior equilibrium), then Appendix~\ref{app:drift-proof} shows that the cross term can be absorbed into the realized-state and error dissipation terms, so $b$ reduces to the diffusion-only contribution. In particular, the deterministic path then enjoys exact exponential decay within the local neighborhood for sufficiently small $\eta$.}
\end{remark}

\subsection{Numerical illustration}
\label{sec:numerical-illustration}

{We apply the Euler discretization of \eqref{eq:pmp} to the biased matching-pennies game of \cite{mertikopoulos2018cycles} (player-1 payoff $\bigl[\begin{smallmatrix}1&-1\\-1&8\end{smallmatrix}\bigr]$, equilibrium $y^\star=(9/11,9/11)$, entropic geometry):
$e^{k+1}=(1-h\alpha)e^k-h\alpha\eta\Psi(y^k)$ and
$x^{k+1}=x^k-h\Psi(\nabla\phi^*(x^k+e^k))$.
Fig.~\ref{fig:numerical-comparison} compares this Euler-PMDG update with mirror descent, optimistic mirror descent, and extra-gradient/mirror-prox under noisy gradient queries, and shows the sensitivity of its terminal error to $(\alpha,\eta)$ over a log-spaced parameter grid. 
}

\begin{figure}[t]
\centering
\includegraphics[width=\linewidth]{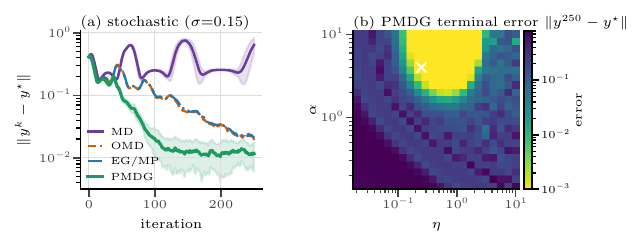}
\caption{{Biased matching pennies \cite{mertikopoulos2018cycles}, $h=0.10$, 250 steps. (a) Mean last-iterate error over 30 noisy trials ($\sigma=0.15$); shading shows one standard deviation for MD and PMDG. (b) Deterministic PMDG terminal error over $(\alpha,\eta)$; the cross marks the panel~(a) operating point.}}
\label{fig:numerical-comparison}
\end{figure}

\section{Conclusion}

This letter treats predictive mirror descent in convex games as a stochastic control-synthesis problem. The proposed stochastic mirror differential game augments mirror play with predictive strategic and realized corrective channels, tied together through a two-channel Fenchel construction that preserves the variational backbone of mirror play while generating the predictive law from the game itself.

{The resulting guarantees are local and finite-horizon: near a stable equilibrium, the localized predictive mirror path satisfies a stochastic Lyapunov estimate, yielding terminal-time bounds together with an exit-probability estimate. The next step is to extend this design perspective beyond the local regime while weakening the growth assumptions needed for concentration.}

\bibliographystyle{ieeetr}
\bibliography{ref}

\appendix

\subsection{Proof for Lemma~\ref{lem:local-drift}}
\label{app:drift-proof}

We compute on $[0,\tau_r]$ and then invoke standard localization for the stopped process.
Set
$Y_t \triangleq  \nabla\phi^*(X_t)$, $\hat Y_t \triangleq  \nabla\phi^*(X_t+E_t)$,
$\Sigma_t \triangleq  \Sigma(X_t,E_t)$, and $\widetilde\Sigma_t \triangleq  \widetilde\Sigma(X_t,E_t)$.
By \eqref{eq:pmp},
$dX_t = -\Psi(\hat Y_t)dt + \Sigma_t dW_t$ and
$dE_t = \big[-\alpha E_t-\alpha\eta\Psi(Y_t)\big]dt + \widetilde\Sigma_t d\widetilde W_t$.

Apply It\^o's formula to the realized-state Bregman term $D_{\phi^*}(X_t,x^\star)$, $
dD_{\phi^*}(X_t,x^\star)
=
-\langle Y_t-y^\star,\Psi(\hat Y_t)\rangle dt 
+ \frac{1}{2}\operatorname{Tr}\big(  \nabla^2\phi^*(X_t)\Sigma_t \Sigma_t^\top\big)dt 
 + \langle Y_t-y^\star,\Sigma_t dW_t\rangle.$
Split the drift term as
$
-\langle Y_t-y^\star,\Psi(\hat Y_t)\rangle
=
-\langle Y_t-y^\star,\Psi(Y_t)\rangle 
-\langle Y_t-y^\star,\Psi(\hat Y_t)-\Psi(Y_t)\rangle.$
By Assumption~\ref{ass:local-vs},
$
-\langle Y_t-y^\star,\Psi(Y_t)\rangle
\le
-c_{\mathrm{vs}}D_{\phi^*}(X_t,x^\star).$
For the mixed term, Assumption~\ref{ass:local-reg} and \eqref{eq:y-d-bound} give
\begin{equation}
\label{eq:breg-mixed-bound-app-1}
\begin{aligned}
\big|\langle Y_t-y^\star,\Psi(\hat Y_t)-\Psi(Y_t)\rangle\big|
&\le \|Y_t-y^\star\|\,\|\Psi(\hat Y_t)-\Psi(Y_t)\| \\
\le L_\Psi\|Y_t-y^\star\|\,\|\hat Y_t-Y_t\| 
&\le L_\Psi M_\phi\|Y_t-y^\star\|\,\|E_t\| \\
\le C_{\mathrm{mix}}\sqrt{D_{\phi^*}(X_t,x^\star)} & \,\|E_t\|, 
\end{aligned}
\end{equation}
where $C_{\mathrm{mix}}=L_\Psi M_\phi\sqrt{C_\phi}$.
Applying Young's inequality with parameter $\mu>0$ yields
$
\big|\langle Y_t-y^\star,\Psi(\hat Y_t)-\Psi(Y_t)\rangle\big|
\le\;
\frac{\mu C_{\mathrm{mix}}^2}{2}D_{\phi^*}(X_t,x^\star) 
+\frac{1}{2\mu}\|E_t\|^2.$
Combining above inequalities and the trace bound
$
\frac{1}{2}\operatorname{Tr}\big(\nabla^2\phi^*(X_t)\Sigma_t\Sigma_t^\top\big)
\le \frac{M_\phi}{2}\sigma_X^2$
gives, with $c_D=c_{\mathrm{vs}}-\frac{\mu C_{\mathrm{mix}}^2}{2}$,
\begin{equation}
\label{eq:breg-final-drift-app}
\mathcal L D_{\phi^*}(X_t,x^\star)
\le
-c_D D_{\phi^*}(X_t,x^\star)
+
\frac{1}{2\mu}\|E_t\|^2
+
\frac{M_\phi}{2}\sigma_X^2.
\end{equation}

Now apply It\^o's formula to the corrective energy $\frac{\kappa}{2}\|E_t\|^2$:
\begin{equation}
\label{eq:e-energy-app}
\begin{aligned}
d\frac{\kappa}{2}\|E_t\|^2
&=
\kappa\langle E_t,dE_t\rangle
+ \frac{\kappa}{2}\operatorname{Tr}(\widetilde\Sigma_t\widetilde\Sigma_t^\top)dt \\
&=
\Big[-\kappa\alpha\|E_t\|^2-\kappa\alpha\eta\langle E_t,\Psi(Y_t)\rangle\Big]dt \\
&\quad + \frac{\kappa}{2}\operatorname{Tr}(\widetilde\Sigma_t\widetilde\Sigma_t^\top)dt
+ \kappa\langle E_t,\widetilde\Sigma_t d\widetilde W_t\rangle.
\end{aligned}
\end{equation}
Using $\|\Psi(Y_t)\|\le B_\Psi$ on the stopped neighborhood and Young's inequality,
$
-\kappa\alpha\eta\langle E_t,\Psi(Y_t)\rangle
\le
\kappa\alpha\eta B_\Psi\|E_t\|
\le
\frac{\kappa\alpha}{2}\|E_t\|^2 + \frac{\kappa\alpha\eta^2 B_\Psi^2}{2}$.
Hence
\begin{equation}
\label{eq:e-final-drift-app}
\mathcal L\frac{\kappa}{2}\|E_t\|^2
\le
-\frac{\kappa\alpha}{2}\|E_t\|^2
+
\frac{\kappa\alpha\eta^2 B_\Psi^2}{2}
+
\frac{\kappa}{2}\sigma_E^2.
\end{equation}

Finally, combine \eqref{eq:breg-final-drift-app} and \eqref{eq:e-final-drift-app}:
\begin{equation}
\label{eq:lyap-generator-app}
\begin{aligned}
\mathcal L\mathcal V(X_t,E_t)
&\le
-c_D D_{\phi^*}(X_t,x^\star)
-\left(\frac{\kappa\alpha}{2}-\frac{1}{2\mu}\right)\|E_t\|^2 \\
&\quad
+\frac{M_\phi}{2}\sigma_X^2
+\frac{\kappa}{2}\sigma_E^2
+\frac{\kappa\alpha\eta^2 B_\Psi^2}{2}.
\end{aligned}
\end{equation}
This is exactly \eqref{eq:local-drift-split} with $c_E=\frac{\kappa\alpha}{2}-\frac{1}{2\mu}$ and $b$ given by \cref{lem:local-drift}. Since
$\mathcal V(X_t,E_t)=D_{\phi^*}(X_t,x^\star)+\frac{\kappa}{2}\|E_t\|^2$,
we also obtain \eqref{eq:local-drift-main} with
$\lambda=\min\{c_D,2c_E/\kappa\}$. Under \eqref{eq:drift-feasibility}, the interval
$\left(\frac{1}{\kappa\alpha},\frac{2c_{\mathrm{vs}}}{C_{\mathrm{mix}}^2}\right)$
is nonempty, so $\mu$ can indeed be chosen to make both $c_D$ and $c_E$ positive.

\subsection{Proof for Theorem~\ref{thm:high-prob-terminal}}
\label{app:high-prob-proof}

From the semimartingale decomposition in Lemma~\ref{lem:local-drift}, on $[0,\tau_r]$,
$
d\mathcal V(Z_t)
\le \big(-\lambda\mathcal V(Z_t)+b\big)dt + dM_t,
$
where the local martingale part is
\begin{equation}
\label{eq:martingale-def}
dM_t
=
\langle Y_t-y^\star,\Sigma_t dW_t\rangle
+
\kappa\langle E_t,\widetilde\Sigma_t d\widetilde W_t\rangle.
\end{equation}
On $\{\tau_r>T\}$, multiplying by $e^{\lambda t}$ and integrating from $0$ to $T$ gives
$\mathcal V(Z_T)
\le e^{-\lambda T}\mathcal V(Z_0)
+\frac{b}{\lambda}(1-e^{-\lambda T})+N_T$,
where $N_T \triangleq  \int_0^{T\wedge\tau_r} e^{-\lambda(T-s)}dM_s$.

On the stopped region $\mathcal K_r$, the sublevel-set definition gives
$D_{\phi^*}(X_t,x^\star)\le r$ and $\|E_t\|^2 \le \frac{2r}{\kappa}$.
Using \eqref{eq:y-d-bound},
\begin{equation}
\label{eq:y-bound-app}
\|Y_t-y^\star\|^2
\le
C_\phi D_{\phi^*}(X_t,x^\star)
\le C_\phi r.
\end{equation}
Therefore the predictable quadratic variation of $N_T$ is bounded by
$
R_s\triangleq \|\Sigma_s\|_F^2\|Y_s-y^\star\|^2+\kappa^2\|\widetilde\Sigma_s\|_F^2\|E_s\|^2.$
Hence
\begin{equation}
\label{eq:qv-bound}
\relax[N]_T
\le
\int_0^{T\wedge\tau_r} e^{-2\lambda(T-s)} R_s\,ds
\le
\nu_r^2\frac{1-e^{-2\lambda T}}{2\lambda}.
\end{equation}
Also,
\[
\|\Sigma_s^\top(Y_s-y^\star)\|^2+\kappa^2\|\widetilde\Sigma_s^\top E_s\|^2 \le R_s.
\]
Set $\bar\nu_T^2\triangleq \nu_r^2(1-e^{-2\lambda T})/(2\lambda)$.
Since $N_T$ is a continuous martingale with $\relax[N]_T\le\bar\nu_T^2$, the exponential martingale inequality yields
$\mathbb P\!\left(N_T \ge \sqrt{2\bar\nu_T^2\log\frac{1}{\delta}}\right) \le \delta$.
Combining this inequality with \eqref{eq:exit-prob} and the union bound proves \eqref{eq:high-prob-main}.

\end{document}